\titleformat{\subsubsection}[runin]
{\normalfont\normalsize\bfseries}{\thesubsubsection}{1em}{}
\numberwithin{equation}{section}
\newcommand{\inclu}[0] {\ar@{^{(}->}}
\newtheorem{thm}{Theorem}[section]
\newtheorem{definition}[thm]{Definition}
\newtheorem{proposition}[thm]{Proposition}
\newtheorem{corollary}[thm]{Corollary}
\newtheorem{assumption}{Assumption}
\crefname{claim}{claim}{claims}
\Crefname{claim}{Claim}{Claims}
\crefname{lem}{lemma}{lemmas}
\Crefname{lem}{Lemma}{Lemmas}
\crefname{algorithm}{algorithm}{algorithms}
\Crefname{algorithm}{Algorithm}{Algorithms}
\theoremstyle{remark}
\pgfplotsset{compat=1.14}
\def\thanks#1{\protected@xdef\@thanks{\@thanks
        \protect\footnotetext{#1}}}
\begin{document}
    \title{Switch Updating in SPSA Algorithm for\\Stochastic Optimization with Inequality Constraints}

    \author{Zhichao Jia$^{1}$ \qquad Ziyi Wei$^{1}$ \qquad James C. Spall$^{2}$}
    \thanks{$^{1}$Zhichao Jia is with the Johns Hopkins University, Department of Applied Mathematics and Statistics (zjia12@jhu.edu).}%
    \thanks{$^{1}$Ziyi Wei is with the Johns Hopkins University, Department of Applied Mathematics and Statistics (zwei19@jhu.edu).}%
    \thanks{$^{2}$James C. Spall is with the Johns Hopkins University, Applied Physics Laboratory and Department of Applied Mathematics and Statistics (james.spall@jhuapl.edu).}%
	\date{}
	\maketitle
	\begin{abstract}
        Simultaneous perturbation stochastic approximation (SPSA) is widely used in stochastic optimization due to its high efficiency, asymptotic stability, and reduced number of required loss function measurements. However, the standard SPSA algorithm needs to be modified to deal with constrained problems. In recent years, sequential quadratic programming (SQP)-based projection ideas and penalty ideas have been analyzed. Both ideas have convergence results and a potentially wide range of applications, but with some limitations in practical consideration, such as computation time, complexity, and feasibility guarantee. We propose an SPSA-based switch updating algorithm, which updates based on the loss function or the inequality constraints, depending on current feasibility in each iteration. We show convergence results for the algorithm, and analyze its properties relative to other methods. We also numerically compare the switch updating algorithm with the penalty function approach for two constrained examples.
    \end{abstract}

    \section{Introduction}


Stochastic approximation (SA) is a family of stochastic algorithms finding the extremums of loss functions with only noisy measurements of them being available. To solve unconstrained stochastic optimization problems, several SA methods have been proposed. The simultaneous perturbation stochastic approximation (SPSA) algorithm analyzed in \cite{spall1992multivariate} is an efficient first-order stochastic approximation that requires only two noisy loss function measurements in each iteration. Applications of the SPSA algorithm can be found in \cite{9198090,hutchison2001simulation,xing2005application} and many other sources. However, the classic SPSA algorithm needs to be modified when is to be applied to constrained problems \cite{spall1993model}. This paper proposes a simple but effective and feasibility-guaranteed algorithm based on the SPSA algorithm to deal with some kinds of inequality constraints. The algorithm relies on the ideas of switching updating rules \cite{articlepolyak} according to different feasibility conditions.

Let us now describe our problem setting. We consider the following constrained optimization problem:
\begin{align}
    \min \ &L(\uptheta), \nonumber\\
    \mathrm{s.t.} \ &q_i(\uptheta) \leq 0, \qquad i=1,...,m,
    \label{problem}
\end{align}
where $L:\mathbb{R}^p \to \mathbb{R}$ is the objective loss function, and $q_i:\mathbb{R}^p \to \mathbb{R}, i=1,...,m$ are differentiable constraint functions. For $L(\uptheta)$, only noisy function measurements $y(\uptheta) = L(\uptheta)+\upvarepsilon(\uptheta)$ are available, where $\upvarepsilon(\uptheta)$ represents the noise term. To ensure a $p$-dimensional feasible region $\Theta=\{\uptheta|q_i(\uptheta) \leq 0, i=1,...,m\}$, we assume that the interior of $\Theta$ is not an empty set. The solution $\uptheta^*$ is assumed to be unique and may lie in anywhere in $\Theta$ (interior or boundary).

Many technical approaches in \cite{wright1999numerical} have proved useful in constrained problem settings. Projection to the feasible set $\Theta$ after updating in each iteration is a common way to maintain feasibility, and, in some cases, it can be simply realized in the SPSA algorithm. For example, \cite{sadegh1997constrained} and \cite{fu_optimization_1997} applied the idea of projecting current infeasible points back to their nearest points in $\Theta$. However, constraint regions are often sufficiently complex so that no way exists to locate exact projection points during iterations, making the projection idea impractical. More recently, \cite{shi2021sqp} applied a sequential quadratic programming (SQP)-based projection method to the SPSA algorithm, searching for approximate projection points by solving an equivalently converted deterministic quadratic constrained problem with the SQP algorithm in each iteration. With approximate projection points converging to exact projections during iterations, the modified projection SPSA algorithm shows convergence as well. This makes the projection idea useful when facing complex constraints. However, when the exact solution lies on the boundary of $\Theta$, solving a constrained problem using the projection process in each of the possibly huge number of iterations will consume much computational time.

Another frequently applied approach is to introduce penalty functions to SA algorithms (see e.g. \cite[Chap.~5 and Chap.~6]{kushner2012stochastic}). By properly constructing a penalty term related to all constraints and adding it to the loss function, the constrained problem is converted to an equivalent unconstrained problem, which eliminates the need to explicitly consider or maintain feasibility during iterations and greatly reduces the computational complexity for each iteration. For instance, \cite{wang_stochastic_2008} constructed modified loss functions with several different kinds of penalty terms and solved these unconstrained problems by the SPSA algorithm. However, the penalty function approach is sensitive to the choice of weighting for the penalty and to the functional form of the penalty. Further, the penalty-based method generally fails to guarantee feasibility for any practical solution from a finite number of iterations. Generally, although implementation of the penalty approach avoids focusing on feasibility in each iteration, the challenges in implementing the method are significant.

In this paper, we propose a switch updating (SU) algorithm based on the SPSA algorithm to cope with constrained problems of the form in \eqref{problem} under some assumptions different from those used in the constrained methods above. With finite feasibility evaluations on the current point in each iteration, our algorithm chooses either an SPSA step based on two measurements of the objective loss function $L(\uptheta)$ or a gradient descent step based on one of the non-satisfied explicit constraint functions $q_i(\uptheta)$. The SU method leads to an updating formula with changes to $\uptheta$ guided explicitly by the loss function or the constraints. Therefore, this SPSA-based SU algorithm directly measures or computes the gradients during iterations, eliminating extra hyper-parameter selection without dealing with any time-consuming process to attain feasibility, such as needing to make projections to the feasible set $\Theta$. Further, the algorithm ensures feasibility for the final solution. This appears to be a novel improvement of SPSA in the constrained stochastic optimization setting.

The organization of the rest of the paper is as follows. Section 2 proposes the SPSA-based switch updating algorithm. Section 3 studies its convergence. Section 4 gives its MSE convergence rate. Section 5 analyzes the asymptotic proportion of iterations where loss function measurements are needed in order to justify that the feasible region can be reached infinitely often. Section 6 presents numerical experiments, comparing the performance of the SU algorithm with the SPSA algorithm using different kinds of penalty ideas. Finally, Section 7 offers closing remarks.

\section{SPSA-based Switch Updating Algorithm}




The basic idea in the switch updating method is that, as long as the current value of $\uptheta$ is infeasible, we choose the gradients of the infeasible constraints and perform the deterministic first-order gradient descent steps to pull the point back to the feasible region. Such a switch updating idea was proposed in \cite{articlepolyak} for dealing with only one constraint function (i.e. $m=1$) in a deterministic setting. The setting of \cite{articlepolyak} sheds light on solving constrained stochastic optimization problems, where we adopt the similar approach to deal with the constraints in efficient stochastic approximation (SA) methods.

The SPSA algorithm analyzed in~\cite[Chap.~7]{spall2005introduction} and ~\cite{spall1998overview} is based on the use of noisy measurements of $L(\uptheta)$, say $y(\uptheta)=L(\uptheta)+\upvarepsilon(\uptheta)$, to approximate the gradient of $L$ for any valid $\uptheta$ without the need for $L(\uptheta)$ or its gradient $g(\uptheta)$. It only uses two measurements $y(\uptheta)$ to update the estimate of $\uptheta$ via an efficient gradient estimate in each step. To combine the classic SPSA algorithm with the idea of the switch updating method at each iteration $k$, we add a switch updating step to keep the $k^{th}$ iteration value $\hat{\uptheta}_k$ feasible after performing the SPSA step in each iteration. In other words, when the current point is feasible, it runs one step based on the SPSA algorithm, which will cost two loss function measurements; otherwise, it runs several steps based on the gradients of the infeasible constraints until the feasibility is attained. We propose the SU algorithm for solving problem \eqref{problem} below. We present the algorithm in two parts. The first, Algorithm \ref{SU}, is the driver that updates the estimate of $\uptheta$ based on new measurements $y(\uptheta)$. Algorithm \ref{SU} calls the second part, Algorithm \ref{reselection}, as needed, to ensure that the constraints are met. Once constraints are met, the estimate for $\uptheta$ is returned to Algorithm \ref{SU} for updating with new function measurements.

In the SU algorithm, $k$ is the iteration counter up to a total number of iterations $K$ where we need to measure $L(\uptheta)$ in Algorithm \ref{SU}, and we use Algorithm \ref{reselection} to meet the constraints. We sample $\Delta_k \in \mathbb{R}^p$ based on some distribution; a commonly adopted one is the $\pm1$ Bernoulli distribution for each of the components of $\Delta_k$ as in \cite{sadegh1998optimal}. 

\begin{algorithm}
\renewcommand{\thealgorithm}{1(a)}
\caption{The SPSA-based Switch Updating Algorithm}
\label{SU}
\begin{algorithmic}
\REQUIRE{an initial point $\hat{\uptheta}_0$, $\Tilde{\uptheta}_0=\hat{\uptheta}_0$, $a>0$, $A \geq 0$, $c>0$, $\upalpha \in (0.5, 1]$, $\upgamma \in [\upalpha/6, \upalpha-0.5)$, $\upbeta \geq 0$, $k=0$, $K>0$ (if known), the switch updating model $\mathcal{M}$.}
\STATE{$a_0=a/(1+A)^\upalpha, \hat{\uptheta}_0=\mathcal{M}(\hat{\uptheta}_0, 0, \upbeta)$}
\WHILE{$k \leq K$}
\STATE{$a_k=a/(k+1+A)^\upalpha$, $c_k = c/(k+1)^\upgamma$}
\STATE{$\hat{g}_k(\hat{\uptheta}_k)=[y(\hat{\uptheta}_k+c_k\Delta_k)-y(\hat{\uptheta}_k-c_k\Delta_k)]/(2c_k\Delta_k)$, where $1/\Delta_{k} = [1/\Delta_{k1},...,1/\Delta_{kp}]^T$ \\ 
$\hat{\uptheta}_{k+1} = \hat{\uptheta}_k - a_k\hat{g}_k(\hat{\uptheta}_k)$, $\hat{\uptheta}_{k+1}=\mathcal{M}(\hat{\uptheta}_{k+1}, k, \upbeta)$, $k=k+1$}
\ENDWHILE
\ENSURE{$\hat{\uptheta}_K \in \Theta$.}
\end{algorithmic}
\end{algorithm}

\begin{algorithm}
\renewcommand{\thealgorithm}{1(b)}
\caption{The Switch Updating Model $\mathcal{M}$}
\label{reselection}
\begin{algorithmic}
\REQUIRE{$\hat{\uptheta}$, $k$ and $\upbeta$.}
\STATE{set $l=0$}
\WHILE{$\hat{\uptheta} \notin \Theta$}
\FOR{$i = 1,2,...,m$}
\IF{$q_i(\hat{\uptheta}) > 0$}
\STATE{$a'_{l}=a'_{l}(k, \upbeta)$, $\hat{\uptheta} = \hat{\uptheta} - a'_{l}\nabla q_i(\hat{\uptheta})$, $l=l+1$ \\
\textbf{break}}
\ENDIF
\ENDFOR
\ENDWHILE
\ENSURE{$\hat{\uptheta} \in \Theta$.}
\end{algorithmic}
\end{algorithm}



The switch updating model $\mathcal{M}$ outputs feasible iterates by modifying $\hat{\uptheta}_k$ at each $k$, as needed in order to meet the constraints. The modification is carried out by a gradient descent method with iteration counter $l$ and a step size $a'_{l}$. As an instance, we pick $a'_{l}(k,\upbeta)=a_k(k+l+1)^{\upbeta}/(k+2l+1)^{\upbeta}$. In this model, $a'_{l}$ is a non-increasing sequence with an adjustable lower bound $a_k/2^\upbeta$. This lower bound tends to zero as we adjust $\upbeta \to \infty$. It is a special case that $a'_{l}=a_k$ when $\upbeta=0$, which is a simple and convenient choice to be applied in most situations. This corresponds to a ``stair-step'' decay of a reduced step size at each entry into Algorithm \ref{reselection}, but a constant step size within Algorithm \ref{reselection}. But in some settings, we need $\upbeta>0$ to make sure $\hat{\uptheta}_{k} \in \Theta$ after finite number of iterations.

For each step of the SU algorithm, the SPSA steps based on $y(\uptheta)$ contribute to attaining descent, while the gradient steps based on $q_i(\uptheta)$ are used to ensure feasibility. The combination of them shown in the SU algorithm provides convergence results to the exact solution of problem \eqref{problem} with feasibility guaranteed as the number of steps increases.

In the following sections of analysis, we set $t$ as the counter of the total number of gradient-based steps in Algorithm \ref{SU}, which includes both $\hat{g}_k$ and $\nabla q_i, i=1,...,m$, and let $\Tilde{a}_t$ represent the step-sizes, $\Tilde{\uptheta}_t$ represent the iterates before each gradient-based step accordingly. For every single replication of the stochastic approximation process, there exists a multi-to-one relationship between $t$ and $k$. Note that there might be several values of $t$ corresponding to each value of $k$, which is because the updates in Algorithm \ref{reselection} only cause increments of $t$, but do not cause increments of $k$.

\section{Convergence Analysis}


For notation simplification, we first define $q(\uptheta)$ as: 

\begin{align}
    q(\uptheta)\equiv
    \begin{cases}
        q_1(\uptheta), &{q_1(\uptheta)>0}, \\
        q_2(\uptheta), &{q_1(\uptheta) \leq 0, q_2(\uptheta)>0}, \\
        q_3(\uptheta), &{q_1(\uptheta),q_2(\uptheta) \leq 0, q_3(\uptheta)>0}, \\
        \cdots\cdots \\
        q_m(\uptheta), &{q_1(\uptheta),...,q_{m-1}(\uptheta) \leq 0, q_m(\uptheta)>0}.
    \end{cases}
\end{align}

Then we define a function $\mathcal{L}(\uptheta)$ as:
\begin{align}
    \mathcal{L}(\uptheta) \equiv
    \begin{cases}
        q(\uptheta), & {\uptheta \notin \Theta}, \\
        L(\uptheta), &{\uptheta \in \Theta}.
    \end{cases}
\end{align}

Algorithm 1(a) uses Algorithm 1(b) as the switch updating model $\mathcal{M}$ and allows the transition from $k$ to $k+1$. Recall that we gave a relationship between $t$ and $k$ in each replication in Section 2, and here we use $\Tilde{\uptheta}_t$ to represent the points after each gradient-based step. The updating formula can be written as:
\begin{align}
    \Tilde{\uptheta}_{t+1}=\Tilde{\uptheta}_t-\Tilde{a}_t\hat{h}_t(\Tilde{\uptheta}_t),
    \label{updatingformula1}
\end{align}

where
\begin{align}
    {\hat{h}_t(\Tilde{\uptheta}_t)=
    \begin{cases}
        \nabla q(\Tilde{\uptheta}_t), & {\Tilde{\uptheta}_t \notin \Theta}, \\
        \hat{g}_{k}(\Tilde{\uptheta}_t), &{\Tilde{\uptheta}_t \in \Theta},
    \end{cases}}
\end{align}


and
\begin{align}
    &\hat{g}_{k}(\Tilde{\uptheta}_t)=\frac{[y(\Tilde{\uptheta}^+_t)-y(\Tilde{\uptheta}^-_t)]}{2c_{k}\Delta_{k}}, \nonumber\\
    &\Tilde{\uptheta}^+_t=\Tilde{\uptheta}_t+c_{k}\Delta_{k}, \Tilde{\uptheta}^-_t=\Tilde{\uptheta}_t-c_{k}\Delta_{k}.
\end{align}

The algorithm representation in \eqref{updatingformula1} is implementable since $\hat{h}(\uptheta_t)$ is computable. For purposes of theoretical analysis, we may decompose \eqref{updatingformula1} as follows:
\begin{align}
    \Tilde{\uptheta}_{t+1}=\Tilde{\uptheta}_t-\Tilde{a}_th(\Tilde{\uptheta}_t)-\Tilde{a}_tb_t-\Tilde{a}_te_t,
    \label{updatingformula2}
\end{align}

where $b_t$ represents the bias in $\hat{h}_t(\Tilde{\uptheta}_t)$, and $e_t$ denotes the noise term. Let $\Im_t=\{\Tilde{\uptheta}_0,...,\Tilde{\uptheta}_t,\Delta_0,...\Delta_{k-1}\}$, $g(\uptheta)=\nabla L(\uptheta)$, $\upvarepsilon_{k}^+=\upvarepsilon(\Tilde{\uptheta}_t^{+})$ and $\upvarepsilon_{k}^-=\upvarepsilon(\Tilde{\uptheta}_t^-)$. We show each term in \eqref{updatingformula2} as:
\begin{align}
    h(\Tilde{\uptheta}_t)=
    \begin{cases}
        \nabla q(\Tilde{\uptheta}_t), & {\Tilde{\uptheta}_t \notin \Theta}, \\
        g(\Tilde{\uptheta}_t), &{\Tilde{\uptheta}_t \in \Theta},
    \end{cases}
\end{align}

\begin{align}
    b_t&=\mathbb{E}[\hat{h}_t(\Tilde{\uptheta}_t)|\Im_t]-h(\Tilde{\uptheta}_t) \nonumber\\
    &=
    \begin{cases}
        \Bar{g}_{k}(\Tilde{\uptheta}_t)-g(\Tilde{\uptheta}_t), &{q_i(\Tilde{\uptheta}_t) \leq 0, i=1,2,...,m}, \\
        0, &{\text{otherwise}},
    \end{cases}
\end{align}

\begin{align}
    e_t&=\hat{h}_t(\Tilde{\uptheta}_t)-\mathbb{E}[\hat{h}_t(\Tilde{\uptheta}_t)|\Im_t] \nonumber\\
    &=
    \begin{cases}
        \hat{g}_{k}(\Tilde{\uptheta}_t)-\Bar{g}_{k}(\Tilde{\uptheta}_t), &{q_i(\Tilde{\uptheta}_t)\leq0, i=1,2,...,m}, \\
        0, &{\text{otherwise}},
    \end{cases}
\end{align}

where
\begin{align}
    \Bar{g}_{k}(\Tilde{\uptheta}_t)=\mathbb{E}[\hat{g}_{k}(\Tilde{\uptheta}_t)|\Im_t]=\mathbb{E}\left[\frac{[L(\Tilde{\uptheta}^+_t)-L(\Tilde{\uptheta}^-_t)]}{2c_{k}\Delta_{k}}\Bigg |\Im_t\right].
\end{align}



For convenience, we introduce $k=\upkappa(t)$ to show the relationship of $t$ and $k$. Based on the convergence result for the SPSA algorithm analyzed in \cite{spall1992multivariate}, and according to the updating formula \eqref{updatingformula2}, we make the following assumptions to ensure almost sure convergence for the SU algorithm:
\begin{assumption}
    $a_k > 0$, $c_k > 0$, $a_k \to 0$, $c_k \to 0$, $\sum_{k=0}^{\infty}a_k = \infty$, $\sum_{k=0}^{\infty}a_k^2/c_k^2 < \infty$. $\forall t$, $\exists t'<\infty \ \mathrm{s.t.} \ \upkappa(t+t')>\upkappa(t)$.
    \label{assumption1'}
\end{assumption}


\begin{assumption}
    $\forall k, t, j$, $\Delta_{kj}$ are i.i.d. and symmetrically distributed about $0$ and $\exists \updelta,\upalpha_0,\upalpha_1,\upalpha_2>0$ $s.t. \mathbb{E}[|\upvarepsilon_k^{\pm}|^{2+\updelta}] \leq \upalpha_0$, $\mathbb{E}[|L(\Tilde{\uptheta}_t^{\pm})|^{2+\updelta}|\Tilde{\uptheta}_t \in \Theta] \leq \upalpha_1$, $|\Delta_{kj}| \leq \upalpha_2$, and $\mathbb{E}[|\Delta_{kj}|^{-2-\updelta}] \leq \upalpha_3$.
    \label{assumption2'}
\end{assumption}

\begin{assumption}
    $\forall t, \|\Tilde{\uptheta}_t\|<\infty$ almost surely.
    \label{assumption3'}
\end{assumption}

\begin{assumption}
    $\uptheta^*$ is an asymptotically stable solution of the differential equation $dx(s)/ds=-h(x)$.
    \label{assumption4'}
\end{assumption}

\begin{assumption}
    Let $D(\uptheta^*)=\{x_0|\lim_{s \to \infty}x(s|x_0)=\uptheta^*\}$ where $x(s|x_0)$ denotes the solution to the differential equation $dx(s)/ds=-h(x)$ based on initial conditions $x_0$. Then there exists a compact set $S \subseteq D(\uptheta^*)$ such that $\Tilde{\uptheta}_t \in S$ infinitely often for almost all sample points.
    \label{assumption5'}
\end{assumption}

For Assumption \ref{assumption1'}, since the gain sequence related to the gradients of the constraint functions does not decay to $0$ before reaching the next feasible point, it is required that their values should be small enough to ensure attaining feasibility in finite number of iterations. This is equivalent that $a_0$ is not extremely large and $\upbeta$ is sufficiently large to avoid diverging, which is not difficult to be satisfied. Assumption \ref{assumption4'} constructs the ODE equation based on not only the gradient of the objective loss function but also the gradients of all the constraint functions. Although $h(\uptheta)$ defined for the SU algorithm is not a continuous function on $\mathbb{R}^p$, $\uptheta^*$ can still be an asymptotically stable solution for the ODE shown in Assumption \ref{assumption4'}. Moreover, $g(\uptheta^*)$, $\nabla q_i(\uptheta^*)$ $(i=1,...,m)$ being nonzero can make $\uptheta^*$ even more stable than under condition $g(\uptheta^*)=0$ in unconstrained problem cases. Assumption \ref{assumption4'} is crucial to Proposition \ref{proposition1}, and as a special case to satisfy it, it is similar to requiring the objective loss function and all the constraint functions to be partly strictly convex.

Proposition \ref{proposition1} and Corollary \ref{corollary1} given below show almost sure convergence for the SU algorithm. 
\begin{proposition}
    Suppose that Assumptions \ref{assumption1'}--\ref{assumption5'} hold. Then when $t \to \infty$, $\Tilde{\uptheta}_t \to \uptheta^*$ almost surely in the SU algorithm.
    \label{proposition1}
\end{proposition}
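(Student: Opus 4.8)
The plan is to recognize the recursion \eqref{updatingformula2} as a standard stochastic approximation iteration of the form $\Tilde{\uptheta}_{t+1}=\Tilde{\uptheta}_t-\Tilde{a}_t[h(\Tilde{\uptheta}_t)+b_t+e_t]$ and then to verify the hypotheses of the general ODE-based convergence theorem underlying the SPSA analysis in \cite{spall1992multivariate}. Assumptions \ref{assumption1'}--\ref{assumption5'} are deliberately tailored to be the conditions of that theorem, so the substantive work is to confirm that the bias term $b_t$ vanishes, that the noise term $e_t$ is well behaved, and that the switching structure does not spoil the gain conditions after reindexing from $k$ to $t$.

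First I would control the bias. On iterations with $\Tilde{\uptheta}_t\notin\Theta$ we have $b_t=0$ by construction, since the constraint step uses the exact gradient $\nabla q$. On iterations with $\Tilde{\uptheta}_t\in\Theta$ the bias is the usual SPSA bias $\Bar{g}_k(\Tilde{\uptheta}_t)-g(\Tilde{\uptheta}_t)$; a second-order Taylor expansion of $L$ about $\Tilde{\uptheta}_t$, combined with the symmetry of $\Delta_k$ and the moment bounds of Assumption \ref{assumption2'}, shows this bias is $O(c_k^2)$ and hence tends to $0$ as $k\to\infty$. For this to take effect along the full $t$-indexed sequence I would invoke the condition in Assumption \ref{assumption1'} that every finite $t$ is eventually overtaken by a strictly larger $\upkappa$, which guarantees both that $k$ advances infinitely often and that each call to Algorithm \ref{reselection} terminates after finitely many inner steps.

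Next I would handle the noise. Again $e_t=0$ on constraint steps, while on SPSA steps $e_t=\hat{g}_k(\Tilde{\uptheta}_t)-\Bar{g}_k(\Tilde{\uptheta}_t)$ is a martingale difference with respect to $\Im_t$. Using the $(2+\updelta)$-moment bounds on $\upvarepsilon_k^{\pm}$ and $L(\Tilde{\uptheta}_t^{\pm})$ together with the inverse-moment bound on $\Delta_{kj}$ from Assumption \ref{assumption2'}, I would show the conditional second moment satisfies $\mathbb{E}[\|e_t\|^2\mid\Im_t]=O(c_k^{-2})$, and then combine this with $\sum_k a_k^2/c_k^2<\infty$ from Assumption \ref{assumption1'} so that $\sum_t\Tilde{a}_t e_t$ converges almost surely by a Doob martingale argument. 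With the bias and noise contributions asymptotically negligible, the recursion is a noisy discretization of the ODE $dx/ds=-h(x)$, and the Kushner--Clark / ODE method---supported by boundedness (Assumption \ref{assumption3'}), asymptotic stability of $\uptheta^*$ (Assumption \ref{assumption4'}), and recurrence to a compact subset of the domain of attraction (Assumption \ref{assumption5'})---delivers $\Tilde{\uptheta}_t\to\uptheta^*$ almost surely.

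The hard part will be justifying the ODE method despite the discontinuity of $h$ across the boundary $\partial\Theta$, where it switches between $\nabla q$ and $g$. The classical theorems presume a continuous mean field, so I would argue that the switching produces no pathological chattering: whenever an iterate leaves $\Theta$ the constraint steps return it in finitely many steps, and because Assumption \ref{assumption4'} is stated directly for the composite, discontinuous $h$, the asymptotic stability of $\uptheta^*$ is assumed rather than something I must re-derive from continuity. The remaining delicate point is the $t$-versus-$k$ reindexing: since the constraint-step gains $\Tilde{a}_t$ do not decay within a single call to Algorithm \ref{reselection}, I must lean on Assumption \ref{assumption1'} to keep those inner loops finite and to confirm that the effective gain sequence, once reindexed by $t$, still meets the Robbins--Monro divergence and square-summability requirements.
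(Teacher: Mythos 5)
Your proposal is correct and follows essentially the same route as the paper's proof: the same decomposition into mean field, bias, and martingale-difference noise, with the bias vanishing by the standard SPSA argument (the paper cites Lemma 1 of \cite{spall1992multivariate} where you redo the Taylor expansion), the noise controlled via second moments of order $O(c_k^{-2})$ combined with $\sum_k a_k^2/c_k^2<\infty$ in a Doob/martingale argument, and the conclusion delivered by the Kushner--Clark ODE theorem under Assumptions \ref{assumption3'}--\ref{assumption5'}. Your explicit flagging of the discontinuity of $h$ and the $t$-versus-$k$ reindexing is a point the paper handles only in the surrounding discussion, but it does not change the argument.
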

\begin{proof}
    

    According to Assumption \ref{assumption1'}, \ref{assumption2'} and Lemma 1 in \cite{spall1992multivariate}, we can directly know:
    \begin{align}
        \|b_t\|<\infty \ \forall t \ \text{and} \ b_t \to 0 \ \text{almost surely}.
        \label{prop1assum1}
    \end{align}

    In as much as $\{\sum_{i=t}^n\Tilde{a}_ie_i\}_{n \geq l}$ is a martingale sequence, and $\mathbb{E}[e_i^Te_j]=\mathbb{E}[e_i^T\mathbb{E}[e_j|\Im_j]] \ \forall i<j$, we have:
    \begin{align}
        P(\sup_{n \geq t}\|\sum_{i=t}^n\Tilde{a}_ie_i\| \geq \eta) &\leq \eta^{-2}\mathbb{E}[\|\sum_{i=t}^n\Tilde{a}_ie_i\|^2] \nonumber\\
        &=\eta^{-2}\sum_{i=t}^n\Tilde{a}_i^2\mathbb{E}[\|e_i^2\|].
        \label{prop1e1}
    \end{align}
    
    
    To establish an upper bound for $\mathbb{E}\|e_t^2\|$, as $e_t=0$ when $\Tilde{\uptheta}_t$ violates one or more constraints, we only need to consider the case that $\Tilde{\uptheta}_t$ is feasible. According to H$\ddot{\text{o}}$lder's theorem and Assumption \ref{assumption2'}, we can obtain $\mathbb{E}[|\upvarepsilon_{k}^{\pm}|^2] \leq \upalpha_0^{\frac{1}{2+\updelta}}=\upalpha_0'$. By the similar process, we can find that $\mathbb{E}[L(\Tilde{\uptheta}_t^{\pm})^2|\Tilde{\uptheta}_t \in \Theta]$ and $\mathbb{E}[\Delta_{{k}j}^{-2}]$ are also both bounded. Let the upper bound for them be $\upalpha_1'$ and $\upalpha_3'$, then we have:
    \begin{align}
        \mathbb{E}[\hat{g}_{{k}j}(\Tilde{\uptheta}_t)^2] \leq &\frac{1}{4}\mathbb{E}[(L(\Tilde{\uptheta}^+_t)-L(\Tilde{\uptheta}^-_t)+\upvarepsilon_{k}^{+}-\upvarepsilon_{k}^{-})^2] \nonumber\\
        &\mathbb{E}[(c_{k}\Delta_{{k}j})^{-2}] \nonumber\\
        \leq &2(\upalpha_1'+\upalpha_0')\upalpha_3'c_{k}^{-2}.
    \end{align}
    Thus, when $\Tilde{\uptheta}_t$ is feasible, for $\mathbb{E}[\|e_t\|^2]$ we have:
    \begin{align}
        \mathbb{E}[\|e_t\|^2] &\leq p\max_{1 \leq j \leq p}\mathbb{E}[e_{tj}^2] \nonumber\\
        &=p\max_{1\leq j \leq p}\mathbb{E}[(\hat{g}_{{k}j}(\Tilde{\uptheta}_t)-\mathbb{E}[\hat{g}_{{k}j}(\Tilde{\uptheta}_t)])^2] \nonumber\\
        &=p\max_{1\leq j \leq p}[\mathbb{E}[\hat{g}_{{k}j}(\Tilde{\uptheta}_t)^2]-(\mathbb{E}[\hat{g}_{{k}j}(\Tilde{\uptheta}_t)])^2] \nonumber\\
        &\leq p\max_{1\leq j \leq p}\mathbb{E}[\hat{g}_{{k}j}(\Tilde{\uptheta}_t)^2] \nonumber\\
        &\leq 2p(\upalpha_1'+\upalpha_0')\upalpha_3'c_{k}^{-2}.
        \label{prop1e2}
    \end{align}
    
    By \eqref{prop1e1}, \eqref{prop1e2} and Assumption \ref{assumption1'}, we have:
    \begin{align}
        \lim_{t \to \infty}P(\sup_{n \geq t}\|\sum_{i=t}^n\Tilde{a}_ie_i\| \geq \eta) \leq 2p\eta^{-2}(\upalpha_1'+\upalpha_0')\upalpha_3'\lim_{t \to \infty}\sum_{k=\upkappa(t)}^{\upkappa(n)}a_k^2c_k^{-2}=0.
        \label{prop1assum2}
    \end{align}
    
    Therefore, according to Assumptions \ref{assumption1'}, \ref{assumption3'}, \ref{assumption4'}, \ref{assumption5'}, \eqref{prop1assum1} and \eqref{prop1assum2}, the conditions of Theorem 2.3.1 in \cite[Chap.~2]{kushner2012stochastic} are satisfied. Based on this theorem, we can finally reach the result that when $t \to \infty$, $\Tilde{\uptheta}_t \to \uptheta^*$.
\end{proof}

\begin{corollary}
    Suppose that Assumptions \ref{assumption1'}--\ref{assumption5'} hold. Then when $k \to \infty$, $\hat{\uptheta}_k \to \uptheta^*$ almost surely in the SU algorithm.
    \label{corollary1}
\end{corollary}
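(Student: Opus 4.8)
The plan is to deduce this from Proposition \ref{proposition1} by a subsequence argument, exploiting the multi-to-one correspondence between the outer counter $k$ and the gradient-step counter $t$ introduced in Section 2.

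First I would make precise the sense in which $\{\hat{\uptheta}_k\}$ sits inside $\{\Tilde{\uptheta}_t\}$. Each $\hat{\uptheta}_k$ is the feasible iterate marking the boundary between consecutive outer iterations: it is the point returned by the switch model $\mathcal{M}$ once feasibility has been restored, and the one from which the next SPSA step departs. Because every $\Tilde{\uptheta}_t$ records an iterate associated with a single gradient-based step---counting both the SPSA step and each individual constraint step performed inside Algorithm \ref{reselection}---there is a strictly increasing sequence of indices $t_0 < t_1 < t_2 < \cdots$ with $\hat{\uptheta}_k = \Tilde{\uptheta}_{t_k}$ for every $k$ (the precise bookkeeping depends only on the harmless before/after convention for $\Tilde{\uptheta}_t$). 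In other words, $\{\hat{\uptheta}_k\}_{k \geq 0}$ is a subsequence of $\{\Tilde{\uptheta}_t\}_{t \geq 0}$.

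The next step is to verify that the subsequence index diverges, i.e. $t_k \to \infty$ as $k \to \infty$. Two facts from Assumption \ref{assumption1'} enter here. The finiteness clause, that for every $t$ there is a finite $t'$ with $\upkappa(t+t') > \upkappa(t)$, guarantees that the inner \textbf{while}-loop of Algorithm \ref{reselection} halts after finitely many constraint steps, so each $\hat{\uptheta}_k$---and hence each $t_k$---is well defined and finite. Moreover, every outer iteration contributes at least the one SPSA gradient step, so $t_{k+1} \geq t_k + 1$, giving $t_k \geq k$ and therefore $t_k \to \infty$.

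Finally, I would invoke the elementary fact that every subsequence of a convergent sequence shares the same limit. Proposition \ref{proposition1} provides $\Tilde{\uptheta}_t \to \uptheta^*$ almost surely; on the same probability-one event we then obtain $\hat{\uptheta}_k = \Tilde{\uptheta}_{t_k} \to \uptheta^*$ as $k \to \infty$, which is the claim. The only point demanding genuine care---as opposed to the routine subsequence manipulation---is the well-definedness of the $t_k$: one must ensure the constraint-correction loop never runs forever, which is exactly what the finiteness condition in Assumption \ref{assumption1'} secures, controlled in practice through the step-size lower bound $a_k/2^\upbeta$ and the freedom to take $\upbeta$ large.
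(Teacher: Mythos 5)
Your proof is correct and follows essentially the same route as the paper's: both arguments identify $\{\hat{\uptheta}_k\}$ as a subsequence of $\{\Tilde{\uptheta}_t\}$, use the finiteness clause of Assumption \ref{assumption1'} to guarantee that the constraint-correction loop terminates (so the correspondence between $k$ and $t$ diverges appropriately), and then transfer the almost sure convergence from Proposition \ref{proposition1} to the subsequence. Your version merely spells out the bookkeeping (the indices $t_k$, their strict monotonicity, and $t_k \geq k$) more explicitly than the paper does.
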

\begin{proof}
    According to Assumption \ref{assumption1'}, whenever $\Tilde{\uptheta}_t$ is infeasible in the SU algorithm, we will attain a feasible point for problem \eqref{problem} in finite numbers of iterations. This indicates that in the SU algorithm, $k=\upkappa(t) \to \infty$ as $t \to \infty$. Then according to Proposition \ref{proposition1}, as a subsequence of $\{\Tilde{\uptheta}_t\}$, $\hat{\uptheta}_k \to \uptheta^*$ almost surely in the SU algorithm.
\end{proof}


\section{Convergence Rate Analysis}
This part shows the convergence rate of the SU algorithm. We will give the asymptotic convergence rate performance of our algorithm in the big-$O$ sense. 

We use a mean gradient-like expression $\bar{h}(\Tilde{\uptheta}_t)=\mathbb{E}[\hat{h}_t(\Tilde{\uptheta}_t)|\Im_t]$ for our analysis in this part, which is:
\begin{align}
    \bar{h}(\Tilde{\uptheta}_t)=
    \begin{cases}
        \nabla q(\Tilde{\uptheta}_t), & {\Tilde{\uptheta}_t \notin \Theta}, \\
        \bar{g}_{k}(\Tilde{\uptheta}_t), &{\Tilde{\uptheta}_t \in \Theta}.
    \end{cases}
    \label{bar}
\end{align}
As an example, if we assume that each element of $\Delta_{k} \in \mathbb{R}^{p}$ here follows $\pm 1$ Bernoulli distribution and $\sum_{\Delta_{k}}$ is the summation of all the possible $\Delta_{k}$, then we have:
\begin{align}
    \bar{g}_{k}(\Tilde{\uptheta}_t)=\frac{1}{2^p}\sum\nolimits_{\Delta_{k}}\frac{L(\Tilde{\uptheta}^{+}_t)-L(\Tilde{\uptheta}^{-}_t)}{2c_{k}}\Delta_{k}^{-1}.
\end{align}

Then we make the following assumptions to ensure the MSE convergence rate of the SU algorithm: 

\begin{assumption}
    The components of $\Delta_{k}$ are i.i.d. random variables and $0<\Delta_{k}^{-T}\Delta_{k}^{-1}\leq d<\infty$.
    \label{assumption1''}
\end{assumption}

\begin{assumption}
    For all $t$, $\mathbb{E}[\upvarepsilon_{k}^{+}-\upvarepsilon_{k}^{-}|\Im_t,\Delta_{k}] = 0$ almost surely, and $\mathrm{var}(\upvarepsilon_{k}^{\pm})$ is uniformly bounded in ${k}$.
    \label{assumption2''}
\end{assumption}

\begin{assumption}
    $\mathbb{E}[(L(\Tilde{\uptheta}^{+}_{t})-L(\Tilde{\uptheta}^{-}_{t}))^2|\Im_t]$ and $\|\nabla q_i(\Tilde{\uptheta}_t)^2\|$ are uniformly bounded for all $t$ almost surely.
    \label{assumption3''}
\end{assumption}

\begin{assumption}
    $\exists M>0$ such that when $\Tilde{\uptheta}_t$ is infeasible, it will not need more than $M$ iterations to get into the feasible set $\Theta$. 
    \label{assumption4''}
\end{assumption}

\begin{assumption}
    There exists $\upmu > 0$ such that $(\Tilde{\uptheta}_t-\uptheta^{*})^{T}\bar{h}(\Tilde{\uptheta}_t)-\upmu(\Tilde{\uptheta}_t-\uptheta^{*})^{T}(\Tilde{\uptheta}_t-\uptheta^{*})\geq 0$ for all $t$.
    \label{assumption5''}
\end{assumption}

Let us comment on the above conditions. For Assumption \ref{assumption3''}, it means that the sequence of \{$\Tilde{\uptheta}_t$\} should not jump to the ``far away'' area so often, which is a natural consequence if $\|\Tilde{\uptheta}_t\|$ is almost surely bounded. For Assumption \ref{assumption4''}, it is easy to satisfy if, to avoid diverging, we select a value of $a_0$ that is not too large. For Assumption \ref{assumption5''}, it is similar to state that $\mathcal{L}(\uptheta)$ is quasi-strongly convex if $\Bar{g}_{\kappa(t)}(\Tilde{\uptheta}_t)=g(\Tilde{\uptheta}_t) \ \forall t$ in some cases. We say $f(\uptheta)$ is a $\upmu'$-quasi-strongly convex function if with $\uptheta^*$ as its unique optimal solution, there exists $\upmu'>0$ such that for any $\uptheta$:
\begin{align}
    f(\uptheta^*)\geq f(\uptheta)+\nabla f(\uptheta)^T(\uptheta^*-\uptheta)+\frac{\upmu'}{2}\|\uptheta-\uptheta^*\|^2.
    \label{quasi}
\end{align}

Specifically, since $\bar{h}(\Tilde{\uptheta}_t)$ is composed of several parts as shown in \eqref{bar}: $\nabla q_i(\Tilde{\uptheta}_t)$ $(i=1,...,m)$ and $\bar{g}_{k}(\Tilde{\uptheta}_t)$, Assumption \ref{assumption5''} actually requires all the constraint functions and the loss function to be partly quasi-strongly convex, with $\uptheta^*$ being the same optimal solution for them.

Proposition \ref{upperbound} given below shows the MSE convergence rate of the SU algorithm.


%
%


\begin{proposition}
    Suppose that Assumptions \ref{assumption1''}--\ref{assumption5''} hold. Then the MSE convergence rate of the SU algorithm is $\mathbb{E}[\|\hat{\uptheta}_{t}-\uptheta^{*}\Vert^{2}]=O(1/t^\upalpha)$. 
    \label{upperbound}
\end{proposition}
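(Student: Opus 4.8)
The plan is to convert the single implementable recursion \eqref{updatingformula1} into a scalar recursion for the mean-squared error $u_t := \mathbb{E}[\|\Tilde{\uptheta}_t-\uptheta^*\|^2]$ and then solve that recursion with a Chung-type gain lemma. Writing $\delta_t := \Tilde{\uptheta}_t-\uptheta^*$ and expanding $\|\delta_{t+1}\|^2=\|\delta_t\|^2-2\Tilde{a}_t\delta_t^T\hat{h}_t(\Tilde{\uptheta}_t)+\Tilde{a}_t^2\|\hat{h}_t(\Tilde{\uptheta}_t)\|^2$, I would condition on $\Im_t$ and use $\mathbb{E}[\hat{h}_t(\Tilde{\uptheta}_t)\mid\Im_t]=\bar{h}(\Tilde{\uptheta}_t)$ from \eqref{bar} to obtain
\begin{align}
\mathbb{E}[\|\delta_{t+1}\|^2\mid\Im_t]=\|\delta_t\|^2-2\Tilde{a}_t\,\delta_t^T\bar{h}(\Tilde{\uptheta}_t)+\Tilde{a}_t^2\,\mathbb{E}[\|\hat{h}_t(\Tilde{\uptheta}_t)\|^2\mid\Im_t].
\end{align}
The reason for phrasing Assumption \ref{assumption5''} in terms of $\bar{h}$ rather than $h$ is that it folds the SPSA bias into the drift, so the cross term is handled in one stroke: $\delta_t^T\bar{h}(\Tilde{\uptheta}_t)\ge\upmu\|\delta_t\|^2$ yields a uniform contraction factor $(1-2\upmu\Tilde{a}_t)$ that is valid both on feasible steps (where $\bar{h}=\bar{g}_{\upkappa(t)}$) and on constraint steps (where $\bar{h}=\nabla q$).

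Next I would bound the driving term $\Tilde{a}_t^2\,\mathbb{E}[\|\hat{h}_t\|^2\mid\Im_t]$ by splitting on feasibility. When $\Tilde{\uptheta}_t\notin\Theta$, $\hat{h}_t=\nabla q(\Tilde{\uptheta}_t)$ is bounded by Assumption \ref{assumption3''}; when $\Tilde{\uptheta}_t\in\Theta$, $\hat{h}_t=\hat{g}_{\upkappa(t)}$ and, repeating the estimate already carried out in the proof of Proposition \ref{proposition1} (now invoking Assumptions \ref{assumption1''}--\ref{assumption3''}), $\mathbb{E}[\|\hat{g}_{\upkappa(t)}\|^2\mid\Im_t]=O(c_{\upkappa(t)}^{-2})$. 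Since $\Tilde{a}_t=a_{\upkappa(t)}$ on a measurement step and $\Tilde{a}_t\le a_{\upkappa(t)}$ on a constraint step, the feasible case dominates and $\Tilde{a}_t^2\,\mathbb{E}[\|\hat{h}_t\|^2\mid\Im_t]=O(a_{\upkappa(t)}^2c_{\upkappa(t)}^{-2})$. To express this in terms of $t$ I would use Assumption \ref{assumption4''}: at most $M$ constraint steps separate consecutive measurement steps, so $\upkappa(t)=\Theta(t)$ and hence $a_{\upkappa(t)}=\Theta(t^{-\upalpha})$, $c_{\upkappa(t)}=\Theta(t^{-\upgamma})$. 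Taking full expectations leaves the scalar recursion
\begin{align}
u_{t+1}\le(1-2\upmu\Tilde{a}_t)\,u_t+O\!\left(t^{-2(\upalpha-\upgamma)}\right).
\end{align}

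Finally I would solve this recursion with a standard gain lemma for $\Tilde{a}_t=\Theta(t^{-\upalpha})$, $\upalpha\in(1/2,1]$: unrolling the product $\prod_{s}(1-2\upmu\Tilde{a}_s)$ shows that the contribution of the initial error decays faster than any polynomial (roughly $\exp(-c\,t^{1-\upalpha})$), so the attained rate is governed entirely by how the driving term accumulates against the contraction. The reported $O(1/t^\upalpha)$ would then be read off for $\Tilde{\uptheta}_t$ and transferred to $\hat{\uptheta}_k$, since the latter is a subsequence with $\upkappa(t)=\Theta(t)$.

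The main obstacle is precisely this last step. The driving term carries the SPSA gradient-estimate variance $c_k^{-2}$, which grows, so the naive noise floor of the recursion behaves like $(\text{driving term})/\Tilde{a}_t=a_{\upkappa(t)}c_{\upkappa(t)}^{-2}=O(t^{-(\upalpha-2\upgamma)})$ rather than $O(t^{-\upalpha})$; extracting the clean exponent $\upalpha$ therefore demands more than the crude floor estimate and is exactly where the summability condition $\sum_{k}a_k^2/c_k^2<\infty$ of Assumption \ref{assumption1'} (together with the lower bound $\upgamma\ge\upalpha/6$ controlling the bias) must be brought to bear to argue that the accumulated noise does not dictate the leading order. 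Bounding that accumulation sharply enough — i.e., controlling $\sum_{s}\prod_{j>s}(1-2\upmu\Tilde{a}_j)\,\Tilde{a}_s^2c_{\upkappa(s)}^{-2}$ against the contraction and reconciling it with the claimed exponent — is the delicate calculation on which the stated rate rests, and I would scrutinize the author's proof most carefully at that juncture.
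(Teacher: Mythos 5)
Your proposal follows the same skeleton as the paper's proof --- split each step according to feasibility, use Assumption \ref{assumption5''} applied to $\bar{h}$ to extract the contraction factor $(1-2\upmu\Tilde{a}_t)$, bound $\|\nabla q_i(\Tilde{\uptheta}_t)\|^2$ by Assumption \ref{assumption3''}, use $k\le t\le Mk$ from Assumption \ref{assumption4''} to pass between the $k$- and $t$-clocks, and solve the resulting scalar recursion with a gain-sequence lemma (the paper uses the explicit solution of \cite{wang2013rate}, yielding \eqref{convergencerate}--\eqref{35}, including the $\upalpha=1$ caveat $2\upmu^*a>1$ that you gloss over). But your write-up stops exactly where the proposition has to be proved. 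You bound the measurement-step driving term by $O(a_{\upkappa(t)}^2c_{\upkappa(t)}^{-2})=O(t^{-2(\upalpha-\upgamma)})$, correctly observe that the recursion then produces only a noise floor of order $a_tc_{\upkappa(t)}^{-2}=O(t^{-(\upalpha-2\upgamma)})$, and defer the passage to $O(1/t^{\upalpha})$ as ``the delicate calculation'' --- which you never carry out. Moreover, the tool you propose for closing that gap, the condition $\sum_k a_k^2/c_k^2<\infty$ with $\upgamma\ge\upalpha/6$ from Assumption \ref{assumption1'}, is not among the hypotheses of Proposition \ref{upperbound} (only Assumptions \ref{assumption1''}--\ref{assumption5''} are assumed), and it could not help anyway: summability of the weighted noise is the ingredient for almost-sure convergence, not for a polynomial MSE exponent, and in the sum $\sum_s\prod_{j>s}(1-2\upmu\Tilde{a}_j)\,\Tilde{a}_s^2c_{\upkappa(s)}^{-2}$ the terms near $s=t$ alone already contribute order $t^{-(\upalpha-2\upgamma)}$. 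So, as a proof of the stated rate, the proposal is incomplete at its decisive step.

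The paper never meets your obstacle because its driving term contains no $c_k^{-2}$ at all: in \eqref{gsquare} it combines Assumption \ref{assumption1''} ($\Delta_k^{-T}\Delta_k^{-1}\le d$) with Assumptions \ref{assumption2''}--\ref{assumption3''} to get the uniform bound $\mathbb{E}[\|\hat{g}_k(\Tilde{\uptheta}_t)\|^2|\Im_t]\le pdb$, so the recursion \eqref{inequality2} has driving term $a_k^2pdb$, and the lemma of \cite{wang2013rate} then gives $O(1/t^{\upalpha})$ directly. Your instinct about where the argument is fragile is, in fact, well placed: \eqref{gsquare} as written omits the factor $(2c_k)^{-2}$ that appears in the definition of $\hat{g}_k$, so the uniform bound holds only under a reading of Assumptions \ref{assumption2''}--\ref{assumption3''} in which the differences are already scaled by $c_k$ (e.g., the noise difference $\upvarepsilon_k^+-\upvarepsilon_k^-$ being of order $c_k$); under the literal reading, the variance inflates as $c_k^{-2}$ exactly as you say, and the attainable exponent is $\upalpha-2\upgamma$. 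But identifying that tension is a critique of the paper's step \eqref{gsquare}, not a proof of the proposition: your proposal establishes the rate neither under the paper's reading of the assumptions nor under your own.
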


\begin{proof}
    First we consider the situation that $\Tilde{\uptheta}_t$ is infeasible. In this case $\Tilde{a}_{t} = \frac{a_k(t+1)^\upbeta}{(t+l+1)^\upbeta}$ and $\hat{h}_t(\Tilde{\uptheta}_t)=\nabla q_i(\Tilde{\uptheta}_t)$ for some $i=1,...,m$. Then we have:
    \begin{align}
        \Tilde{\uptheta}_{t+1} = \Tilde{\uptheta}_{t}-\frac{a_k(t+1)^\upbeta}{(t+l+1)^\upbeta}\nabla q_i(\Tilde{\uptheta}_t).
    \end{align}
    
    Subtracting $\uptheta^{*}$ from both sides and calculating the norm squared, we get:
    \begin{align}
        \|\Tilde{\uptheta}_{t+1}-\uptheta^{*}\|^{2} = &\|\Tilde{\uptheta}_{t}-\uptheta^{*}\|^2 -2\Tilde{a}_t(\Tilde{\uptheta}_t-\uptheta^{*})^{T}\nabla q_i(\Tilde{\uptheta}_t) \nonumber\\
        &+\Tilde{a}_{t}^{2}\|\nabla q_i(\Tilde{\uptheta}_t)\|^2.
    \end{align}
    
    Because $\|\nabla q_i(\Tilde{\uptheta}_t)\|^2$ is uniformly bounded, according to Assumption \ref{assumption3''}, there exists $B > 0$ such that $\|\nabla q_i(\Tilde{\uptheta}_t)\|^2 \leq B$ almost surely. Then we have:
    \begin{align}
        \|\Tilde{\uptheta}_{t+1}-\uptheta^{*}\|^{2} = &\|\Tilde{\uptheta}_{t}-\uptheta^{*}\|^2 -2\Tilde{a}_{t}(\Tilde{\uptheta}_t-\uptheta^{*})^{T}\nabla q_i(\Tilde{\uptheta}_t) \nonumber\\
        &+ \Tilde{a}_{t}^{2}\|\nabla q_i(\Tilde{\uptheta}_t)\|^2 \nonumber\\
        \leq &(1-2\upmu\Tilde{a}_{t})\|\Tilde{\uptheta}_{t}-\uptheta^{*}\|^2+\Tilde{a}_{t}^{2}B \nonumber\\
        = &\left[1-\frac{2\upmu a_k(t+1)^\upbeta}{(t+l+1)^\upbeta}\right]\|\Tilde{\uptheta}_{t}-\uptheta^{*}\|^2 \nonumber\\
        &+\frac{a_k^{2}(t+1)^{2\upbeta}}{(t+l+1)^{2\upbeta}}B \nonumber\\
        \leq &\left(1-\frac{2\upmu a_k}{2^{\upbeta}}\right)\|\Tilde{\uptheta}_{t}-\uptheta^{*}\|^2+a_k^2B.
        \label{inequality1}
    \end{align}
    
    Similarly, when $\Tilde{\uptheta}_{t}$ is feasible with $k=\upkappa(t)$, now $\Tilde{a}_t=a_{\upkappa(t)}=a_k$, and we have:
    \begin{align}
        \Tilde{\uptheta}_{t+1} = \Tilde{\uptheta}_{t}-\Tilde{a}_{t}\hat{g}_{k}(\Tilde{\uptheta}_t)=\Tilde{\uptheta}_{t}-a_k\hat{g}_{k}(\Tilde{\uptheta}_t).
    \end{align}

    Since $\bar{h}(\Tilde{\uptheta}_t)=\mathbb{E}[\hat{h}_t(\Tilde{\uptheta}_t)]$, we still subtract $\uptheta^*$ and calculate the norm squared, and take the expectation condition on $\Tilde{\uptheta}_t$. Then we get the inequality similar as above:
    \begin{align}
        \mathbb{E}[\|\hat{\uptheta}_{t+1}-\uptheta^{*}\|^{2}|\Im_t]=&\|\Tilde{\uptheta}_{t}-\uptheta^{*}\|^2+a_k^2\mathbb{E}[\|\hat{g}_{k}(\Tilde{\uptheta}_t)\|^2|\Im_t] \nonumber\\
        &-2a_k(\Tilde{\uptheta}_t-\uptheta^{*})^{T}\mathbb{E}[\hat{g}_{k}(\Tilde{\uptheta}_t)|\Im_t] \nonumber\\
        \leq&(1-2\upmu a_{k})\|\Tilde{\uptheta}_{t}-\uptheta^{*}\|^2 \nonumber\\
        &+a_{k}^{2}\mathbb{E}[\|\hat{g}_{k}(\Tilde{\uptheta}_t)\|^2|\Im_t].
        \label{lupdate}
    \end{align}
    
    According to Assumption \ref{assumption2''} and \ref{assumption3''}, $\mathbb{E}[(L(\Tilde{\uptheta}^{+}_t)-L(\Tilde{\uptheta}^{-}_t))^2|\Im_t]+\mathbb{E}[(\upvarepsilon_{k}^{+}-\upvarepsilon_{k}^{-})^2|\Im_t]$ is upper bounded. Let us assume that its upper bound is $b$, therefore we have:
    \begin{align}
        \mathbb{E}[\|\hat{g}_{k}(\Tilde{\uptheta}_t)\|^2|\Im_t]=&\mathbb{E}[(L(\Tilde{\uptheta}^{+}_t)-L(\Tilde{\uptheta}^{-}_t))^2\Delta_{k}^{-T}\Delta_{k}^{-1}|\Im_t] \nonumber\\ &+\mathbb{E}[(\upvarepsilon_{k}^{+}-\upvarepsilon_{k}^{-})^2\Delta_{k}^{-T}\Delta_{k}^{-1}|\Im_t] \nonumber\\
        \leq &pd(\mathbb{E}[(L(\Tilde{\uptheta}^{+}_t)-L(\Tilde{\uptheta}^{-}_t))^2|\Im_t] \nonumber\\
        &+\mathbb{E}[(\upvarepsilon_{k}^{+}-\upvarepsilon_{k}^{-})^2|\Im_t]) \nonumber\\
        \leq &pdb.
        \label{gsquare}
    \end{align}
    
    Take \eqref{gsquare} into \eqref{lupdate} and take the expectation, we have:
    \begin{align}
        \mathbb{E}[\|\Tilde{\uptheta}_{t+1}-\uptheta^{*}\|^{2}]\leq(1-2\upmu a_{k})\mathbb{E}[\|\Tilde{\uptheta}_{t}-\uptheta^{*}\|^2]+a_k^2pdb.
        \label{inequality2}
    \end{align}
    
    According to Assumption \ref{assumption4''}, we have $k\leq t\leq Mk$. Considering the inequalities \eqref{inequality1} and \eqref{inequality2}, we let $\upmu^{*} = \min\{\upmu/2^\upbeta,1/(4a_0)\}$ when $0.5<\upalpha<1$, $\upmu^{*} \leq \upmu/2^\upbeta$ (and make sure $2\upmu^*a \neq 1$) when $\upalpha=1$, and $B^{*} = \max\{B,pdb\}$. For any iteration $t$ in the whole process when $0.5<\upalpha<1$, we have $1-2\upmu^*a_t>0$. Then in general:
    \begin{align}
        \mathbb{E}[\|\Tilde{\uptheta}_{t+1}-\uptheta^{*}\|^{2}] &\leq (1-2\upmu^{*}a_k)\mathbb{E}[\|\Tilde{\uptheta}_{t}-\uptheta^{*}\|^2]+a_k^{2}B^* \nonumber\\
        &\leq (1-2\upmu^{*}a_t)\mathbb{E}[\|\Tilde{\uptheta}_{t}-\uptheta^{*}\|^2]+a_t^{2}M^{\upalpha}B^*.
    \end{align}
    
    Using the proof in \cite{wang2013rate}, we get the following inequality:
    \begin{align}
        \mathbb{E}[\|\Tilde{\uptheta}_{t}-\uptheta^{*}\|^{2}] \leq 
        \begin{cases}
        \exp\left\{\frac{2\upmu^*a[(1+A)^{1-\upalpha}-(1+A+t)^{1-\upalpha}]}{1-\upalpha}\right\} \left[\|\Tilde{\uptheta}_0-\uptheta^{*}\|^2-\frac{T(t,\upalpha)}{(1+A)^\upalpha}\right]+\frac{T(t,\upalpha)}{(1+A+t)^\upalpha},& 0.5<\upalpha<1,\\
        \frac{(1+A)^{2\upmu^*a}}{(1+A+t)^{2\upmu^*a}}\left[\|\Tilde{\uptheta}_0-\uptheta^*\|^2-\frac{T(t,1)}{1+A}\right]+\frac{T(t,1)}{1+A+t}, & \upalpha = 1,
        \end{cases}
        \label{convergencerate}
    \end{align}
    
    where 
    \begin{align}
        &T(t,\upalpha) = \frac{M^{\upalpha}B^{*}a^2C(\upalpha)}{2\upmu^*a-\upalpha/[1+A+f(t,\upalpha)]^{1-\upalpha}}, T(t,1)= \frac{MB^{*}a^2C(1)}{2\upmu^*a-1}, \nonumber\\
        &C(\upalpha) = \exp\left\{\frac{2\upmu^*a}{1-\upalpha}\left[(2+A)^{1-\upalpha}-(1+A)^{1-\upalpha}\right]\right\} \left(\frac{2+A}{1+A}\right)^{2\upalpha},C(1)=(\frac{2+A}{1+A})^{2\upmu^*a+2},\nonumber\\
        &f(t,\upalpha) = \left\{\frac{\int_0^t(1+A+x)^{-2\upalpha}\exp\{\frac{2\upmu^*a(1+A+x)^{1-\upalpha}}{1-\upalpha}\}dx}{\int_0^t(1+A+x)^{-1-\upalpha}\exp\{\frac{2\upmu^*a(1+A+x)^{1-\upalpha}}{1-\upalpha}\}dx}\right\}^{\frac{1}{1-\upalpha}}-1-A.
        \label{35}
    \end{align}

    For the case $0.5<\upalpha<1$, we can rewrite the equation of $f(t,\upalpha)$ as:
    \begin{align}
        &\frac{1}{[1+A+f(t,\upalpha)]^{1-\upalpha}}=\frac{\int_0^t(1+A+x)^{-1-\upalpha}\exp\{\frac{2\upmu^*a(1+A+x)^{1-\upalpha}}{1-\upalpha}\}dx}{\int_0^t(1+A+x)^{-2\upalpha}\exp\{\frac{2\upmu^*a(1+A+x)^{1-\upalpha}}{1-\upalpha}\}dx}.
    \end{align}
    
    By L'H$\hat{\text{o}}$pital's rule, when $t \to \infty$, we have:
    \begin{align}
        &\lim_{t \to \infty} \frac{1}{[1+A+f(t,\upalpha)]^{1-\upalpha}} \nonumber\\
        = &\lim_{t \to \infty} \frac{\int_0^t(1+A+x)^{-1-\upalpha}\exp\{\frac{2\upmu^*a(1+A+x)^{1-\upalpha}}{1-\upalpha}\}dx}{\int_0^t(1+A+x)^{-2\upalpha}\exp\{\frac{2\upmu^*a(1+A+x)^{1-\upalpha}}{1-\upalpha}\}dx} \nonumber\\
        = &\lim_{t \to \infty} (1+A+t)^{\upalpha-1} \nonumber\\
        = &0.
        \label{37}
    \end{align}
    
    From \eqref{37}, when $0.5<\upalpha<1$, we know that as $t \to \infty$, $f(t,\upalpha) \to \infty$, then we can get $T(t,\upalpha) \to \frac{M^{\upalpha}B^{*}aC(\upalpha)}{2\upmu^*}$, which is a constant. Similarly, when $\upalpha=1$, we have $T(t,1)=\frac{MB^{*}aC(1)}{2\upmu^*a-1}$ which is also a constant. When $t$ is large enough, the effect of $A$ can be ignored. Therefore, for the case $0.5<\upalpha<1$, the convergence rate of the second term in RHS of \eqref{convergencerate} is $O(1/t^\upalpha)$; for the first term in its RHS, we only need to consider the exponential part, and it is obvious that this part goes to $0$ faster than the second term, which indicates the convergence rate is $O(1/t^\upalpha)$. For the case $\upalpha=1$, if $2\mu^{*}a>1$, then we have $T(t,1)>0$, which indicates the convergence rate is $O(1/t^\upalpha)=O(1/t)$; otherwise, if $2\mu^{*}a<1$, $T(t,1)<0$ and the convergence rate is $O(1/t^{2\upmu^{*}a})$. Note that the value of $a$ is adjustable in this case to satisfy $2\mu^{*}a>1$ and achieve the faster convergence rate. Therefore, we can finally get the convergence rate as $\mathbb{E}[\|\hat{\uptheta}_{t}-\uptheta^{*}\Vert^{2}]=O(1/t^\upalpha)$. \\
\end{proof}

\section{Asymptotic Result for Number of Loss Function Measurements}


In this section, we are going to gain some insight into the perspective of the number of loss function measurements. For the SU algorithm, the loss function is not measured in every iteration. Here we give an asymptotic result for the ratio of iterations that necessitates loss function measurements in the SU algorithm. This result also justifies the last part in Assumption \ref{assumption1'} that the feasible region can be reached infinitely often.

According to Karush-Kuhn-Tucker (KKT) conditions, we define the set of KKT points as the asymptotically stable optimal solutions for problem \eqref{problem}.
\begin{definition}
    The set of KKT points are defined as: $\{\uptheta \ | \forall i \ q_i(\uptheta) \leq 0, \exists \uplambda_i \geq 0 \ \mathrm{s.t.} \uplambda_iq_i(\uptheta)=0, \frac{dL(\uptheta)}{d\uptheta} + \sum_{i=1}^m\uplambda_i\frac{dq_i(\uptheta)}{d\uptheta} = 0\}$.
    \label{definitionKKT}
\end{definition}

To further let KKT points necessarily be the optimal solutions for problem \eqref{problem}, some type of constraint qualification should be satisfied. We then define a common type of constraint qualification called linear independence constraint qualification (LICQ) condition.

\begin{definition}
    Let set $A(\uptheta)=\{i|q_i(\uptheta)=0, i=1,...,m\}$. When $\nabla q_i(\uptheta)$ $(i \in A(\uptheta))$ are linearly independent at $\uptheta$, the LICQ condition is satisfied at $\uptheta$ for problem \eqref{problem}.
    \label{definitionLICQ}
\end{definition}

Moreover, if $A(\uptheta)=\phi$, then the LICQ condition is automatically satisfied at $\uptheta$.

\begin{proposition}
    Suppose that Assumptions \ref{assumption1'}--\ref{assumption5'} hold. Let $\uptheta^*$ be a KKT point and the optimal solution for problem \eqref{problem}, and $\uplambda_i$ $(i=1,...,m)$ be the Lagrange multipliers corresponding to $\uptheta^*$ in Definition \ref{definitionKKT}. Assume the LICQ condition is satisfied at $\uptheta^*$ for problem \eqref{problem}. Then when $t \to \infty$, the proportion of the iterations that make loss function measurements in the SU algorithm with $\upalpha \in (0.5, 1)$ is $\frac{1}{1+\sum_{i=1}^m\uplambda_i}$.\label{proportion}
\end{proposition}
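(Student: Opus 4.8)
The plan is to read the claimed proportion as a limiting frequency of step types and to pin it down by a displacement–balance argument at the limit point $\uptheta^*$. Recall that the counter $t$ advances on every gradient-based step while $k=\upkappa(t)$ advances only on the SPSA (loss-measuring) steps, with exactly one SPSA step per value of $k$. Hence the proportion of iterations that measure the loss is $\upkappa(t)/t$, and proving the claim amounts to showing $t/\upkappa(t)\to 1+\sum_{i=1}^m\uplambda_i$; that is, the number of constraint-gradient steps performed per SPSA step, broken down by constraint, should tend to $\uplambda_i$ for each $i$.

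First I would localize the analysis near $\uptheta^*$. By Proposition \ref{proposition1}, $\Tilde{\uptheta}_t\to\uptheta^*$ almost surely, so by continuity $g(\Tilde{\uptheta}_t)\to g(\uptheta^*)$ and $\nabla q_i(\Tilde{\uptheta}_t)\to\nabla q_i(\uptheta^*)$; moreover every inactive constraint ($q_i(\uptheta^*)<0$) is eventually strictly satisfied, so for large $t$ only the active constraints $i\in A(\uptheta^*)$ can trigger a gradient step, consistent with complementary slackness $\uplambda_i=0$ for inactive $i$.

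Next I would write the telescoping displacement over a window $[t_0,t_1]$, namely $\Tilde{\uptheta}_{t_1}-\Tilde{\uptheta}_{t_0}=-\sum_{t=t_0}^{t_1-1}\Tilde{a}_t\hat{h}_t(\Tilde{\uptheta}_t)$, and split the sum by step type. The SPSA contribution is $\sum_k a_k\hat{g}_k$, whose martingale noise part converges almost surely (the series $\sum a_k^2/c_k^2$ is summable by Assumption \ref{assumption1'}, exactly as used in \eqref{prop1assum2}) and whose bias vanishes by \eqref{prop1assum1}, so it equals $g(\uptheta^*)\sum_k a_k$ up to lower-order terms. Each constraint-$i$ contribution is $\nabla q_i(\uptheta^*)$ times the step-size-weighted number $S_i$ of constraint-$i$ steps, again up to lower order. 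Since the displacement on the left is bounded while $\sum_k a_k\to\infty$ (here $\upalpha<1$ enters), dividing by $\sum_k a_k$ forces the net drift to vanish: $g(\uptheta^*)+\sum_i\nabla q_i(\uptheta^*)\,\rho_i=0$, where $\rho_i=\lim S_i/\sum_k a_k$ is the weighted frequency. Substituting the KKT relation $g(\uptheta^*)=-\sum_i\uplambda_i\nabla q_i(\uptheta^*)$ and invoking the LICQ linear independence of $\{\nabla q_i(\uptheta^*)\}_{i\in A(\uptheta^*)}$ then lets me match coefficients and conclude $\rho_i=\uplambda_i$ for each active $i$.

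The main obstacle, and the closing step, is converting these step-size-weighted frequencies into the raw counting frequencies that define the proportion. Because $\upalpha\in(0.5,1)$, the gains satisfy $a_{k+1}/a_k\to 1$ (slow variation) with $\sum_k a_k$ divergent, so no fixed-length block of iterations carries asymptotically significant weight; a Toeplitz/Abel averaging argument then shows that the $a_k$-weighted frequency of constraint-$i$ steps per SPSA step and the plain count frequency share the same limit $\uplambda_i$ (this is precisely where $\upalpha<1$, and the exclusion of $\upalpha=1$, do their work, and is the most delicate point to make fully rigorous). Granting this, the number of gradient steps per SPSA step tends to $1+\sum_i\uplambda_i$, whence $\upkappa(t)/t\to 1/(1+\sum_{i=1}^m\uplambda_i)$, as claimed. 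For $\upbeta>0$ one further notes that the intra-reselection gains lie in $[a_k/2^\upbeta,a_k]$ and the number of steps per entry into Algorithm \ref{reselection} is finite by Assumption \ref{assumption1'}, so the weighting correction is uniformly controlled and does not affect the limit.
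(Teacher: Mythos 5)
Your core mechanism---forcing the asymptotic drift to vanish and then matching coefficients via the KKT identity plus LICQ uniqueness---is the same as the paper's, but your global-window implementation leaves a genuine gap at exactly the step you flag as ``most delicate,'' and the fix you sketch does not work. What the displacement balance gives you is that the $a_k$-\emph{weighted} frequencies $\rho_i$ of constraint-$i$ steps equal $\uplambda_i$; the proposition is about unweighted \emph{counting} frequencies. Passing from the former to the latter is a Tauberian problem, not a Toeplitz one: Toeplitz-type theorems pass from convergence of a sequence to convergence of its weighted averages, never from one family of Cesàro-type means to another. The implication you need can fail when the number $n_i(k)$ of constraint-$i$ steps performed at SPSA iteration $k$ is unbounded and bursty, because the decaying weights $a_k$ remember a burst at iteration $k_0$ with relative mass of order $(k_0/K)^{1-\upalpha}$ while the counting measure remembers it with mass of order $k_0/K$; with sparse, growing bursts the two means can have different limit points. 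Nothing in Assumptions \ref{assumption1'}--\ref{assumption5'} bounds $n_i(k)$---Assumption \ref{assumption1'} only guarantees that each entry into Algorithm \ref{reselection} terminates, and the uniform bound of Assumption \ref{assumption4''} is introduced only for the rate result and is not available here. Note also that the two properties you invoke, $a_{k+1}/a_k \to 1$ and $\sum_k a_k = \infty$, both hold at $\upalpha = 1$ as well, so they cannot be ``where $\upalpha<1$ does its work.''

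The paper closes this hole with a localization your proposal is missing, and that is where $\upalpha<1$ genuinely enters. It considers a window of $k' = k^{(1+\upalpha)/2}$ consecutive gradient steps starting at $t$, with $k=\upkappa(t)$. Since $k'/k \to 0$, every gain inside the window equals $\Tilde{a}_t(1+o(1))$---including the intra-reselection factor $[(k+l+1)/(k+2l+1)]^{\upbeta}$, which tends to $1$ because $l \le k' = o(k)$---so the balance equation comes out directly in counting form and no weighted-to-counting conversion is ever needed. At the same time, because $(1+\upalpha)/2 > \upalpha$ precisely when $\upalpha<1$, the accumulated gain satisfies $\Tilde{a}_t k' \ge 2^{-\upbeta} a_k k^{(1+\upalpha)/2} \to \infty$, so dividing the vanishing window displacement by $\Tilde{a}_t k'$ still forces the window average of $h$ to zero, i.e.\ $\left[k'_0\, g(\uptheta^*) + \sum_{i=1}^m k'_i \nabla q_i(\uptheta^*)\right]/k' \to 0$, after which the KKT/LICQ matching you describe finishes the proof. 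Without this choice of window---short enough that all gains are equal to first order, long enough that the accumulated weight diverges---or an added assumption bounding the constraint steps per iteration, your argument pins down only the weighted frequencies and does not yield the stated proportion.
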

\begin{proof}
    We know that the optimal solution $\uptheta^*$ for our problem \eqref{problem} is a KKT point for it. Due to Assumption \ref{assumption1'}, $k=\kappa(t) \to \infty$ as $t \to \infty$. Then according to Proposition \ref{proposition1}, when $t \to \infty$, $\hat{\uptheta}_k \to \uptheta^*$ almost surely in the SU algorithm. Considering $k'=k^{\frac{1+\upalpha}{2}} \to \infty$ iterations after the $t$-th iteration with $k=\upkappa(t)$, we have:
    \begin{align}
        \lim_{t \to \infty}\Tilde{\uptheta}_t=\lim_{t \to \infty}[\Tilde{\uptheta}_t-\sum_{i=0}^{k'-1}\Tilde{a}_{t+i}\hat{h}_{t+i}(\Tilde{\uptheta}_{t+i})].
        \label{tlinitial}
    \end{align}

    Since $b_t=\mathbb{E}[\hat{h}_t(\Tilde{\uptheta}_t)|\Tilde{\uptheta}_t]-h(\Tilde{\uptheta}_t)\to0$ and $e_t=\hat{h}_t(\Tilde{\uptheta}_t)-\mathbb{E}[\hat{h}_t(\Tilde{\uptheta}_t)|\Tilde{\uptheta}_t]\to0$ as $t \to \infty$, we have $\hat{h}_{t+i}(\Tilde{\uptheta}_{t+i}) \to h(\Tilde{\uptheta}_{t+i})$. Note that since $h(\uptheta)$ is not continuous at $\uptheta^*$, we cannot get $h(\Tilde{\uptheta}_{t+i}) \to h(\uptheta^*)$ as $\Tilde{\uptheta}_{t+i} \to \uptheta^*$. Then \eqref{tlinitial} becomes:
    \begin{align}
        \uptheta^*&=\uptheta^*-\lim_{t \to \infty}\sum_{i=0}^{k'-1}\Tilde{a}_{t+i}h(\Tilde{\uptheta}_{t+i}).
    \end{align}
    
    Since $\mathop{\lim}_{t \to \infty}\frac{k'}{t} \leq \mathop{\lim}_{t \to \infty}\frac{k'}{k}=0$, we have $\mathop{\lim}_{t \to \infty}\Tilde{a}_t=\mathop{\lim}_{t \to \infty}\Tilde{a}_{t+i}$ when $0 \leq i<k'$. Then we get:
    \begin{align}
        \lim_{t \to \infty}\sum_{i=0}^{k'-1}\Tilde{a}_{t+i}h(\Tilde{\uptheta}_{t+i})&=0, \nonumber\\
        \lim_{t \to \infty}\Tilde{a}_tk'\frac{\sum_{i=0}^{k'-1}h(\Tilde{\uptheta}_{t+i})}{k'}&=0.
    \end{align}
    
    Since $\lim_{t \to \infty}\Tilde{a}_tk'>\lim_{t \to \infty}\frac{k'}{2^\upbeta}a_{k}k'>0$, we have:
    \begin{align}
        \lim_{t \to \infty}\frac{\sum_{i=0}^{k'-1}h(\Tilde{\uptheta}_{t+i})}{k'}&=0.
    \end{align}
    
    Assume that in $k'$ iterations, we update $k'_0$ times based on functional measurements of $L(\uptheta)$, and $k'_i$ $( i=1,2,...,m)$ times based on $\nabla q_i(\uptheta)$ respectively, then $k'=\sum_{i=0}^mk'_i$. Denote $g(\uptheta)=\nabla L(\uptheta)$, when $t \to \infty$ we have:
    \begin{align}
        \lim_{t \to \infty}\frac{k'_0g(\uptheta^*)+\sum_{i=1}^mk'_i\nabla q_i(\uptheta^*)}{k'}&=0.
        \label{timesmeasurenments}
    \end{align}

    Since the LICQ condition is satisfied at KKT point $\uptheta^*$, then we define a set $A=\{i|q_i(\uptheta^*)=0\}$. For all $i \in A$, $\nabla q_i(\uptheta^*)$ are linearly independent. Therefore, according to Definition \ref{definitionKKT}, there exists a unique set of $\uplambda_i$ $(i=1,2,...,m)$ that $\uplambda_i=0$ if $i \notin A$, and they satisfy:
    \begin{align}
        g(\uptheta^*)+\sum_{i=1}^m\uplambda_i\nabla q_i(\uptheta^*)&=0.
    \end{align}

    Therefore, we finally have:
    \begin{align}
        \lim_{t \to \infty}\frac{k'_0}{k'}=\lim_{t \to \infty}\frac{k'_0}{k'_0+\sum_{i=1}^mk'_i}=\frac{1}{1+\sum_{i=1}^m\uplambda_i}.
    \end{align}

    This result is satisfied when $t \to \infty$, thus it shows the asymptotic proportion of the iterations that make loss function measurements in the SU algorithm. \\
\end{proof}

Proposition \ref{proportion} given above shows that as $t \to \infty$ in the SU algorithm, $k/t$ is a constant value. This also allows the result of Proposition \ref{upperbound} to be equivalently based on the number of loss function measurements.

\section{Numerical Analysis}

\subsection{Overview}
In this section, we implement the SU algorithm solving two different constrained stochastic optimization problems, with quadratic and quartic loss functions respectively. A main difference is that for quadratic loss functions, the bias $b_t=0$ for any $t$, while there usually exists nonzero bias for quartic loss functions during iterations.

We compare our algorithm with the SPSA algorithms based on penalty ideas given in \cite{wang_stochastic_2008}. Basically, by applying penalty ideas, problem \eqref{problem} is constructed as an unconstrained problem in each iteration $k$ as:
\begin{align}
    \min_{\uptheta \in \Theta} \ L_k(\uptheta)=\min \ L(\uptheta)+r_kP(\uptheta),
    \label{penaltyproblem}
\end{align}
where $P:\mathbb{R}^p \to \mathbb{R}^+$ is the penalty function, and the penalty gain sequence $\{r_k\}$ consists of positive values. We consider comparing the SU algorithm with the SPSA algorithms using three different kinds of penalty functions $P(\uptheta)$: the absolute value penalty (AVP) function, the quadratic penalty (QP) function, and the augmented Lagrange (AL) function. 

To initialize the hyper-parameters of these constrained SPSA algorithms, we use the same following gain sequences: $a_k = a/(k + 1 + A)^\upalpha$ and $ c_k = c/(k+1)^\upgamma $. Standard criteria for the selection of $a$, $A$, $c$, $\upalpha$, $\upgamma$ could be seen in \cite[Sect.~7.5]{spall2005introduction}. Furthermore, for the penalty functions, we choose $r_k = r(k+1)^{\uprho}$ as the penalty gain sequence, where $\uprho$ represents the growth rate, and it should satisfy $ \upalpha - \upgamma - 2\uprho > 0 $ and $ 3\upgamma - \upalpha/2 + 3\uprho/2 >0 $ from \cite{wang_stochastic_2008}. For the AVP function, $r_n = r$ is chosen since the minimum of $ L + rP $ is identical to the original problem \eqref{penaltyproblem} for all $r > \Bar{r} $, where $\Bar{r}$ is sure to exist according to Proposition 4.3.1 in \cite{bertsekas1997nonlinear}.

For all the algorithms in our numerical experiments, we use the same values of the random generated noises $\upvarepsilon_k$ and the sample $\Delta_k$ in any given iteration of each replicate to achieve a fair comparison.

\subsection{A Quadratic Example}
The first test case we use is a quadratic example, which comes from \cite{wang_stochastic_2008}:
\begin{align}
    \min\limits_{\uptheta} \ L(\uptheta) &= t_1^2 + t_2^2 + 2t_3^2 + t_4^2 - 5t_1 - 5t_2 - 21t_3 + 7t_4, \nonumber\\
    \mathrm{s.t.} \ q_1(\uptheta) &= 2t_1^2 + t_2^2 + t_3^2 + 2t_1 - t_2 - t_4 - 5 \leq 0, \nonumber\\
    q_2(\uptheta) &= t_1^2 + t_2^2 + t_3^2 + t_4^2 + t_1 - t_2 + t_3 - t_4 - 8 \leq 0, \nonumber\\
    q_3(\uptheta) &= t_1^2 + 2t_2^2 + t_3^2 + 2t_4^2 - t_1 - t_4 - 10 \leq 0.
    \label{example1}
\end{align}

In this test example, $\uptheta = [t_1, t_2, t_3, t_4]^T$. The optimal point $\uptheta^* = [0, 1, 2, -1]^T$ and the constraints $q_1(\uptheta)$ and $q_2(\uptheta)$ are active at $\uptheta^*$. The Lagrange multiplier at $\uptheta^*$ is $\uplambda^* = [2,1,0]$. We add i.i.d. noise following normal distribution $N(0, 4)$ to the objective function and choose the initial point $\hat{\uptheta}_0$ as $[-2, -2, -2, -2]^T$, which is outside the feasible set. We set $\upalpha=0.602$, $\upgamma=0.101$, $\upbeta=1$, $a=0.1$, $A=100$, and $c=1$. For the QP algorithm, we set $r_k=2(k+1)^{0.1}$. For the AL algorithm, we set $r_k=(k+1)^{0.1}$ and the initial value of $\uplambda_k$ as $[0,0,0]^T$. For the AVP algorithm, we set $r_k=r=3.5$.

We generate the results of the averaged relative error $\|\hat{\uptheta}_K-\uptheta^*\|/\|\hat{\uptheta}_0-\uptheta^*\|$ based on $50$ independent replicates with $4000$ loss function measurements ($K=2000$) in each replicate, and define a non-negative value $Q(\uptheta)=\sum_{i=1}^m\max\{0,q_i(\uptheta)\}/m$, using the averaged $Q(\hat{\uptheta}_K)$ to measure the amount of constraint violation of the final solutions. Smaller values of the averaged $Q(\hat{\uptheta}_K)$ means less infeasibility, while the value of zero indicates all feasible outputs. Besides, we also conduct three pairs of two-sample tests between the SU algorithm and each of the AVP, QP and AL algorithm, and each with the null hypothesis $H_0$: The averaged relative error of the SU algorithm is larger or equal than the compared algorithm. A smaller $p$-value means that we are more confident to reject $H_0$. Additionally, we compute the proportion of the iterations using loss function measurements in the last $100$ iterations. Below are the results.

\begin{figure}[htbp]
    \begin{subfigure}{0.42\textwidth}
        \centering
        \includegraphics[width=\textwidth]{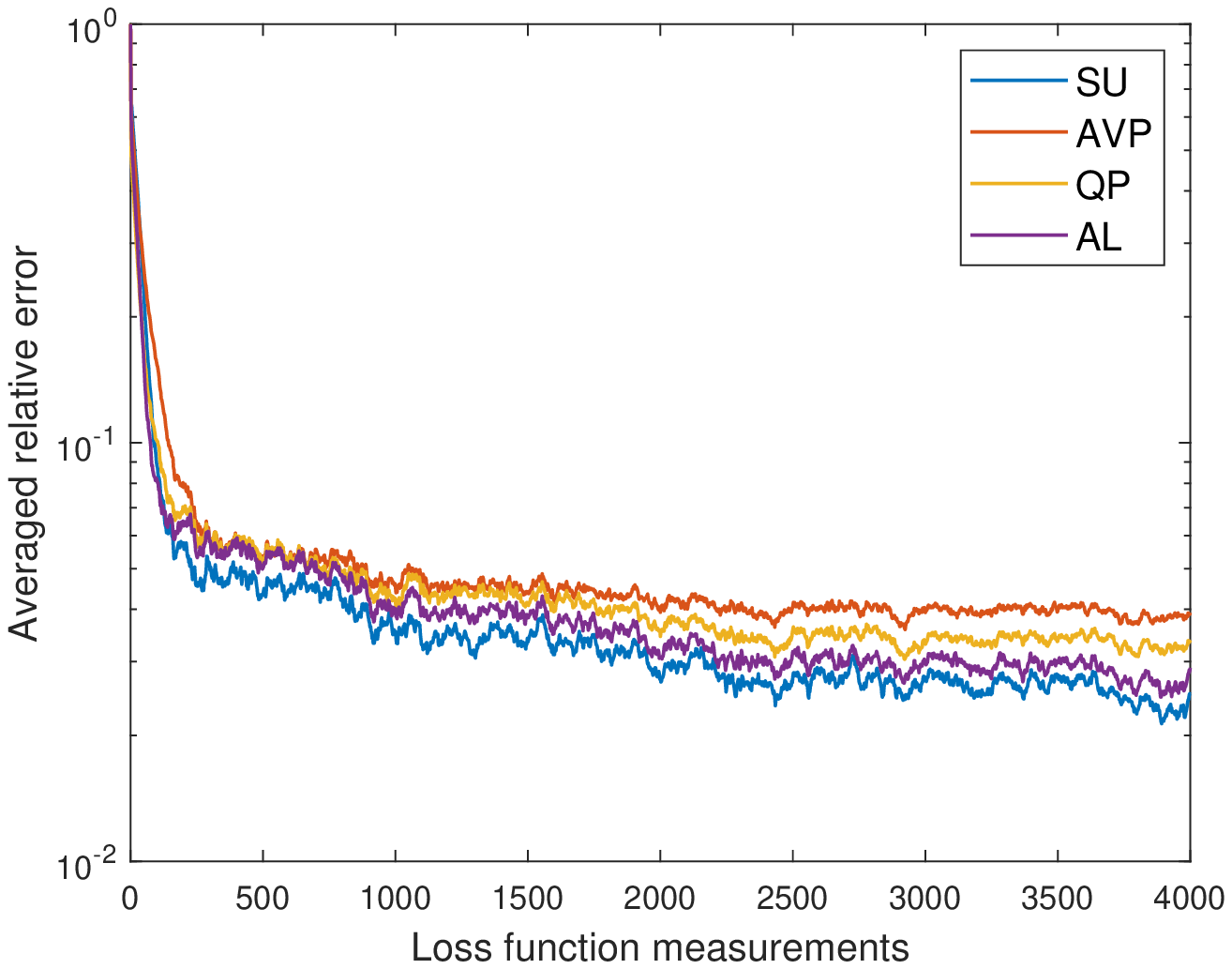}
        \caption{Averaged Relative Error}
    \end{subfigure}
    \hfill
    \begin{subfigure}{0.42\textwidth}
        \centering
        \includegraphics[width=\textwidth]{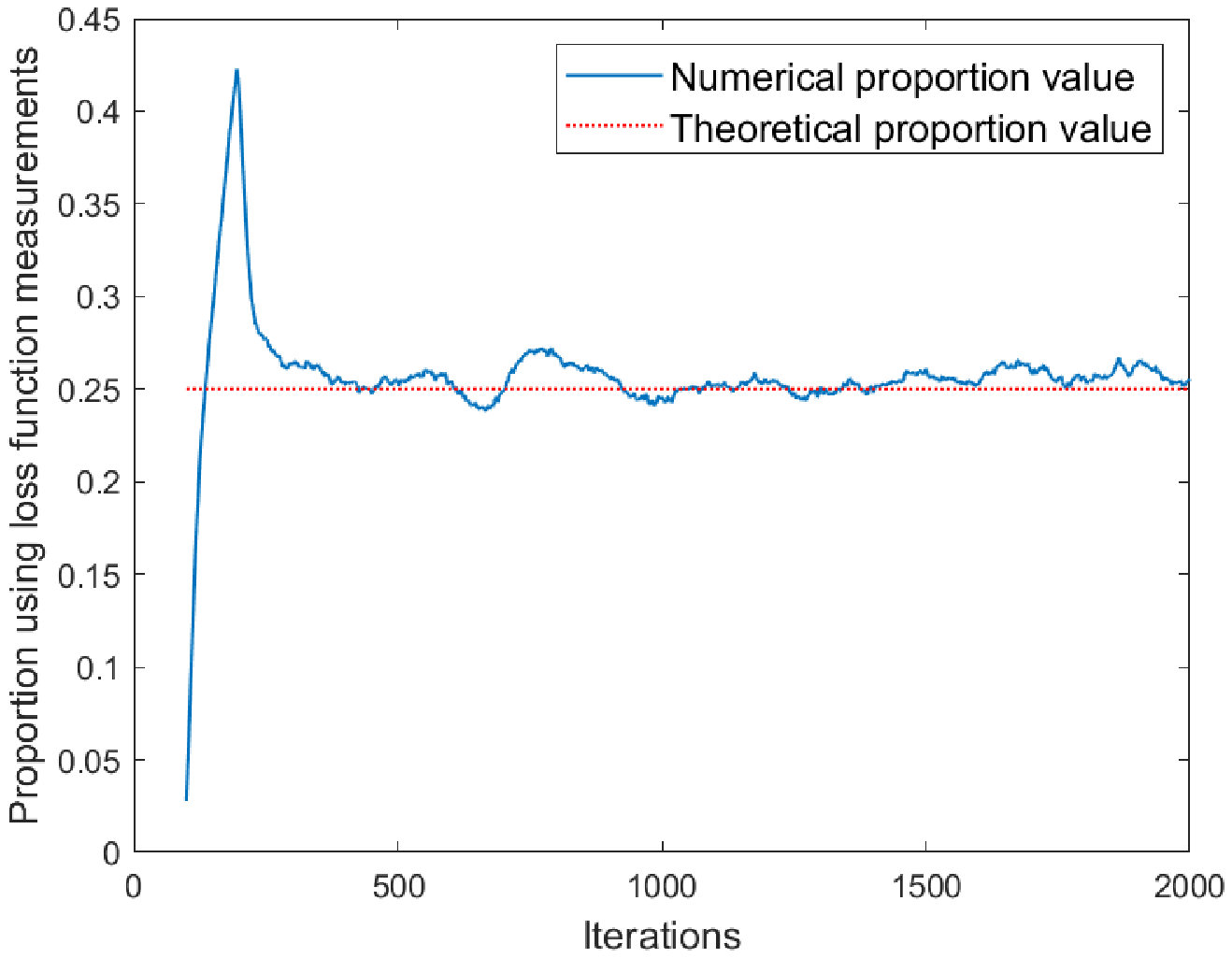}
        \caption{Asymptotic Proportion}
    \end{subfigure}
    \caption{The Quadratic Example}
    \label{quadratic}
\end{figure}

\begin{table}[htbp]
\begin{center}
\begin{tabular}{|c||c|c|c|}
\hline
 & $\|\hat{\uptheta}_K-\uptheta^*\|/\|\hat{\uptheta}_0-\uptheta^*\|$ & $p$-value versus SU & $Q(\hat{\uptheta}_K)$ \\
\hline
SU & $0.1374$ & NA & $0$ \\
\hline
AVP & $0.2149$ & $1.1083 \times 10^{-08}$ & $0.4988$ \\
\hline
QP & $0.1847$ & $5.0040 \times 10^{-04}$ & $0.2177$ \\
\hline
AL & $0.1587$ & $0.0638$ & $0.0498$ \\
\hline
\end{tabular}
\end{center}
\caption{Averaged Relative Error, Feasibility \\ and Two-sample Test Result}
\label{table_example1}
\end{table}

\subsection{A Quartic Example}
The second test example we use is a quartic case revised from our previous test example \eqref{example1}, which is also an example of Exercise 5.5 shown in \cite{spall2005introduction}:
\begin{align}
    \min\limits_{\uptheta} \ L(\uptheta) = &\sum_{i=1}^2 t_i^4 + \uptheta^T B \uptheta + \uptheta^TV, \nonumber\\
    \mathrm{s.t.} \ q_1(\uptheta) = &2t_1^2 + t_2^2 + t_3^2 + 2t_1 - t_2 - t_4 - 5 \leq 0, \nonumber\\
    q_2(\uptheta) = &t_1^2 + t_2^2 + t_3^2 + t_4^2 + t_1 - t_2 + t_3 - t_4 - 8 \leq 0, \nonumber\\
    q_3(\uptheta) = &t_1^2 + 2t_2^2 + t_3^2 + 2t_4^2 - t_1 - t_4 - 10 \leq 0,
    \label{example2}
\end{align}

where the values of the parameters are:

\begin{align}
    B = \begin{bmatrix} 0 & 0 & 3.5 & 0 \\ 0 & 1 & 0 & -8 \\ 3.5 & 0 & 8 & 0 \\ 0 & -8 & 0 & 5 \end{bmatrix}, V = \begin{bmatrix} -19 \\ -25 \\ -45 \\ 31 \end{bmatrix} + \upvarepsilon.
\end{align}

In this test example, $\upvarepsilon \in \mathbb{R}^{4}$ follows a multivariate normal distribution $N(\mathbf{0},4\mathrm{I}_{4})$, which makes the noise depend on $\uptheta$. The basic settings are the same as the quadratic example except that $K=3000$. Below are the results.

\begin{figure}[htbp]
    \begin{subfigure}{0.42\textwidth}
        \centering
        \includegraphics[width=\textwidth]{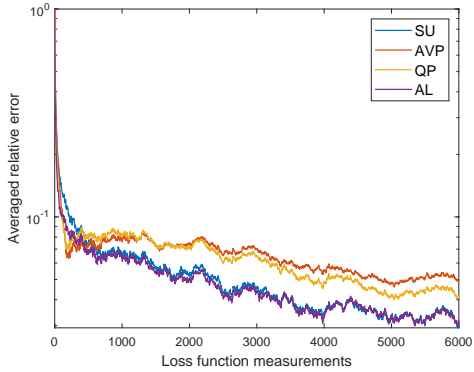}
        \caption{Averaged Relative Error}
    \end{subfigure}
    \hfill
    \begin{subfigure}{0.42\textwidth}
        \centering
        \includegraphics[width=\textwidth]{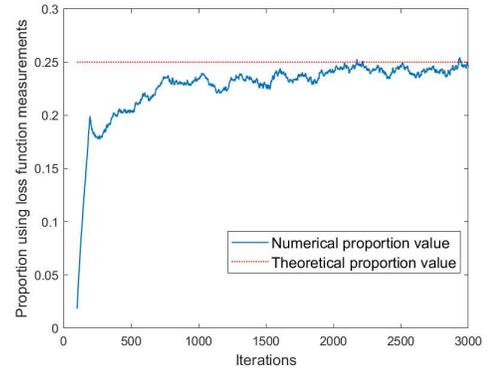}
        \caption{Asymptotic Proportion}
    \end{subfigure}
    \caption{The Quartic Example}
    \label{quartic}
\end{figure}

\begin{table}[htbp]
\begin{center}
\begin{tabular}{|c||c|c|c|}
\hline
 & $\|\hat{\uptheta}_K-\uptheta^*\|/\|\hat{\uptheta}_0-\uptheta^*\|$ & $p$-value versus SU & $Q(\hat{\uptheta}_K)$ \\
\hline
SU & $0.1718$ & NA & $0$ \\
\hline
AVP & $0.2703$ & $1.4572 \times 10^{-05}$ & $0.4244$ \\
\hline
QP & $0.2274$ & $0.0045$ & $0.1968$ \\
\hline
AL & $0.1740$ & $0.3878$ & $0.0291$ \\
\hline
\end{tabular}
\end{center}
\caption{Averaged Relative Error, Feasibility \\ and Two-sample Test Result}
\label{table_example2}
\end{table}

\section{Conclusions}
In this paper, we propose the SPSA-based switch updating algorithm for solving constrained stochastic optimization problems. It alternatively updates according to feasibility and conducts one step using the stochastic gradient of the objective loss function estimated by the SPSA algorithm or the gradient of one of the constraint functions. We also show the almost sure convergence result for our algorithm and the asymptotic result for the proportion of iterations that requires loss function measurements. Compared to the SPSA algorithms based on projection or penalty ideas, three dominant advantages exist for the SU algorithm:

(a) Simplicity for implementation and computing;

(b) Feasibility guarantees for final solutions;

(c) No necessity for extra hyper-parameter tuning.

Numerical experiments reveal the efficiency of the SU algorithm over the SPSA algorithm based on penalty ideas and justifies the asymptotic result. It could be postulated that under a large fixed number of loss function measurements, the SU algorithm achieves better performance than the SPSA algorithms based on penalty ideas, with its final solutions closer to the optimal solutions. Together with the advantages above, the SU algorithm is a more efficient and reliable method than the SPSA algorithms based on penalty ideas from an all-round perspective.

Further directions for future research do exist. Initially, switch updating ideas could be applied to more first-order stochastic approximation methods. Additionally, with proper assumptions or algorithmic strategies dealing with the noises added on constraint functions, convergence results could be extended to constrained stochastic optimization problems with noisy constraints (see \cite[Chap.~5]{kushner2012stochastic}). Finally, more work related to the theoretical results of the convergence rate of the SU algorithm could be conducted.

    {\small 
    \bibliographystyle{unsrt}
    \bibliography{ref}
    }
    \appendix

\end{document}